\let\footnote=\endnote
\begin{document}

\begin{frontmatter}

\title{Is the p-value a good measure of evidence? An asymptotic consistency~criterion} 

\author[mg]{M.~Grend\'ar}\fnref{mxf}
\ead{marian.grendar@savba.sk.}
\fntext[mxf]{Supported by VEGA grant 1/0077/09. Valuable feedback from George Judge, Lenka Mackovi\v cov\'a, J\'an Ma\v cutek, Jana Majerov\'a, Andrej P\'azman, Franti\v sek Rubl\'ik, Vladim\'ir \v Spitalsk\'y, Franti\v sek \v Stulajter and Viktor Witkovsk\'y is gratefully  acknowledged.}

\address[mg]{Department of Mathematics, FPV UMB, 974 01 Bansk\'a Bystrica, Slovakia; Institute of Mathematics and Computer Science of the Slovak Academy of Sciences (SAS), and UMB, Bansk\'a Bystrica, Slovakia; Institute of Measurement Sciences SAS, Bratislava, Slovakia}

\begin{abstract}
What are the criteria  that a measure of statistical evidence  should satisfy? 
It is argued that a measure of evidence should be consistent. Consistency is an asymptotic criterion:
the probability that if a measure of evidence in data strongly testifies against a hypothesis $H$, then 
$H$ is indeed not true, should go to one,  as more and more data appear.
The p-value is not consistent, while the ratio of likelihoods is.
\end{abstract}

\begin{keyword}
statistical evidence \sep consistency \sep p-value \sep ratio of likelihoods \sep extended ratio of likelihoods  \sep Bayes factor \sep posterior odds
\end{keyword}

\end{frontmatter}

\newtheorem{thm}{Proposition}
\newtheorem*{note}{Note}

\section{Introduction}

The p-value is commonly used as a measure of evidence in a data $X_1^n$, against a hypothesis $H_1$: the smaller the p-value, the stronger the evidence against $H_1$ in the data. Recall that the p-value is the smallest level at which a test $T(X_1^n)$ rejects $H_1$. According to the typical calibration \cite{CH}, \cite{Wasserman}, the p-value smaller than 0.01 suggests a very strong evidence against $H_1$.

Unlike the p-value, which measures evidence against a single hypothesis, the ratio of likelihoods\endnotemark[1] measures evidence in a data for a simple hypothesis $H_1$, relative to a simple hypothesis $H_2$. For a parametric model $f_X(x\,|\, \theta)$, the ratio of likelihoods $r_{12} = f(X_1^n\,|\, H_1)/f(X_1^n\,|\, H_2)$ measures evidence for $H_1$ relative to $H_2$, in data $X_1^n$. The value of $r_{12}$ above a certain threshold $k > 1$ is taken as an evidence in favor of $H_1$, and against $H_2$. Values of $k$ around~$30$ are suggested for a threshold, above which the evidence is considered very strong (cf. \cite{Royall_book}, \cite{Barnard}).

Statistics abounds criteria for assessing quality of estimators, tests, forecasting rules, classification algorithms, but besides the likelihood principle discussions (cf. \cite{BW}), it seems to be almost silent on what criteria should a good measure of evidence satisfy.
Schervish, in a notable exception\endnotemark[2] \cite{Schervish}, considers a requirement of coherence, borrowed from the multiple comparisons theory \cite{G}.  
If $H: \theta\in\Theta$ implies $H': \theta\in\Theta'$ (i.e., $\Theta\subset\Theta'$), 
then the coherent measure of evidence gives at least as strong evidence to $H'$ as it gives to $H$.
The p-value is not coherent; cf. \cite{Schervish}.
In this note, an asymptotic criterion of consistency is introduced, and it is demonstrated that the p-value is not consistent, while the ratio of likelihoods satisfies the consistency requirement.

\section{Measure of evidence}

To set a formal framework, let $X\in\mathrm{R}^K$
be a random variable with the probability density (or mass) function $f_X(x\,|\, \theta)$, parametrized by $\theta\in\Theta\subseteq\mathrm{R}^L$, and such that if $\theta\neq\theta'$ then $f_X(\cdot\,|\,\theta)\neq f_X(\cdot\,|\,\theta')$.
Let $\Theta_1, \Theta_2$ form a partition of $\Theta$, and associate $\Theta_j$ with the hypothesis $H_j$, $j = 1,2$.  Let $X_1^n \triangleq X_1, \dots, X_n \sim f_X(x\,|\, \theta)$ be a random sample from $f_X(x\,|\, \theta)$.
A measure of evidence $\epsilon(H_1, H_2, X_1^n)$, in data $X_1^n$, for the hypothesis $H_1: X_1^n \sim f_X(x\,|\, \theta)$ where $\theta\in\Theta_1$, relative to  $H_2: X_1^n \sim f_X(x\,|\, \theta)$ where $\theta\in\Theta_2$, is a mapping $\epsilon(H_1, H_2, X_1^n): \Theta_1 \times \Theta_2 \times (\mathrm{R}^K)^n \rightarrow \mathrm{R}$.  
It usually  goes with a calibration that partitions  values of $\epsilon(\cdot)$ into intervals, or categories.
In what follows, the interest will concentrate on the category $S$ of the most extreme values of the evidence measure $\epsilon(\cdot)$ that correspond to the strongest evidence.
Finally, the measure of evidence against a hypothesis $H_1$, relative to $H_2$, in data $X_1^n$, will be denoted $\epsilon(\neg H_1, H_2, X_1^n)$.

\section{Consistency requirement}

In \cite{SBB}, Sellke, Bayarri, and Berger stress that in applications of an evidence measure, data sets may come from either $H_1$ or $H_2$.
The authors illustrate this important point by an example of testing drugs $D_1, D_2, D_3, \dots$, for an illness, in a series of independent experiments. The measure of evidence applied to a data set from $i$-th experiment, is used to differentiate between the hypothesis $H_1$ that the drug $D_i$ has a negligible effect, and the alternative $H_2$ that the drug $D_i$ has a non-negligible effect. 
Some drugs have negligible effects, some have the non-negligible one. In other words, some experimental data $X_1^n$ come from $H_1$, other data sets are from $H_2$. This key aspect of applications of the evidence measure can be captured by the following two-level sampling mechanism:
\begin{enumerate}
\item First, $\theta$ is drawn from a pdf (or pmf) $p(\theta)$.

\item Given $\theta$, 
a random sample $X_1^n$ is drawn from $f_X(x\,|\, \theta)$.
\end{enumerate}

As the sample size $n$ increases, it should hold, informally put,  that among the data sets which, according to the measure of evidence strongly
testify against $H_1$, the relative number of those which in fact come from $H_1$, should go to zero. This motivates the following requirement of consistency\endnotemark[3]: We say that a measure of evidence $\epsilon(\neg H_1, H_2, X_1^n)$ against $H_1$, relative to $H_2$,  is \emph{consistent}, if
$$
\lim_{n\rightarrow\infty} Pr(H_1\,|\, \epsilon(\neg H_1, H_2, X_1^n) \in S) = 0.
$$

The probability that $\theta$ is in $\Theta_1$, given that the measure of evidence $\epsilon(\neg H_1, H_2, X_1^n)$ strongly testifies against $H_1$, relative to $H_2$, should go to zero, as the sample size $n$ goes beyond any limit.

\section{Is the p-value consistent?}

The p-value is $\pi \triangleq \inf\, \{\alpha: T(X_1^n)\in R_\alpha\}$, where $T$ is a test statistic, $\alpha$ is the size of the test, and $R_\alpha$ is the rejection region for $H_1$. 
In this section it is assumed that $X$ is a continuous random variable and the test statistic $T$ is such that it rejects $H_1$ when the observed value $t$ of $T$ is large. Then the p-value is $\pi = \sup_{\Theta_1} Pr(T > t\,|\, \theta)$. The p-value $\pi(\neg H_1,\cdot, X_1^n)$ as a measure of evidence against $H_1$ does not take $H_2$ into account. Let $S = [0, \alpha_S)$ be the interval of values that indicate the very strong evidence against $H_1$.

Before addressing the question of consistency of the p-value in general, consider an illustrative example of
the gaussian random variable $X$ with the variance $\sigma^2 = 1$, 
and let $\Theta_1 = \{\theta_1\}$, $\Theta_2 = \{\theta_1 + \delta\}$, $\delta > 0$. 
Let $w = p(\Theta_1)$, $w \in (0, 1)$.
And, let $T(X_1^n) = \sqrt{n}(\overline{x} - \theta_1)$ 
be the test statistic, and $R_\alpha = \{X_1^n: T(X_1^n) > z_{1-\alpha}\}$ be the rejection region, with $z_{1-\alpha}$ denoting the $1-\alpha$ quantile of the standard normal distribution.

Under $H_1$, the p-value is a uniform random variable, so $Pr(\pi(\neg H_1,\cdot, X_1^n) \in S\,|\, \Theta_1) = \alpha_s$. Under $H_2$, the power of the test is
$Pr(\pi(\neg H_1, \cdot, X_1^n) \in S\,|\, \Theta_2) = 1 - \Phi(z_{1-\alpha_s} - \sqrt{n}\delta)$, where $\Phi(\cdot)$ is the distribution function of the standard normal random variable. 
Note that $Pr(\pi(\neg H_1,\cdot, X_1^n) \in S\,|\, \Theta_2)$ converges to $1$, for $\delta > 0$.
Taken together,
$\lim_{n\rightarrow\infty} Pr(H_1 \,|\, \epsilon(\neg H_1,\cdot, X_1^n) \in S) =
    \frac{\alpha_S w}{1 - w(1-\alpha_S)}$.
Thus, in this simple example, the p-value is not a consistent measure of evidence against $H_1$.

Following the reasoning in the above example, it can be demonstrated that the p-value is inconsistent\endnotemark[4].

\begin{thm}
Let $\Theta_1$, $\Theta_2$ form a partition of $\Theta$.
Let $p(\theta)$ be such that $w \triangleq \int_{\Theta_1} p(\theta)$ is $w\in(0,1)$.
And, let  $T$, $R_\alpha$, be such that  $Pr(\pi(\neg H_1,\cdot, X_1^n) \in S\,|\, \Theta_2)\rightarrow 1$, as $n\rightarrow\infty$ (i.e., for $\theta\in\Theta_2$, the power of the test $T$ converges to $1$). Then it holds that
\begin{equation}\label{1}
  \lim_{n\rightarrow\infty} Pr(H_1 \,|\, \pi(\neg H_1,\cdot, X_1^n) \in S) = \frac{\alpha_S w}{1 - w(1 - \alpha_S)}.
\end{equation}
\end{thm}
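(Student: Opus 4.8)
The plan is to evaluate the left-hand side of \eqref{1} through Bayes' theorem applied to the two-level sampling mechanism, which turns the problem into computing the limits of the two prior-averaged rejection probabilities, over $\Theta_1$ and over $\Theta_2$, and then combining them. By the law of total probability,
\[
Pr(H_1\,|\,\pi(\neg H_1,\cdot,X_1^n)\in S) = \frac{w\,Pr(\pi\in S\,|\,\Theta_1)}{w\,Pr(\pi\in S\,|\,\Theta_1) + (1-w)\,Pr(\pi\in S\,|\,\Theta_2)},
\]
where $Pr(\pi\in S\,|\,\Theta_j) = \bigl(\int_{\Theta_j} Pr(\pi<\alpha_S\,|\,\theta)\,p(\theta)\,d\theta\bigr)/p(\Theta_j)$ and $p(\Theta_1)=w$, $p(\Theta_2)=1-w$; the assumption $w\in(0,1)$ makes both conditional probabilities well defined and the denominator positive.

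For the $\Theta_2$ term, the hypothesis directly supplies $Pr(\pi\in S\,|\,\Theta_2)\rightarrow 1$; equivalently, since the pointwise power $Pr(\pi<\alpha_S\,|\,\theta)$ tends to $1$ for every $\theta\in\Theta_2$ and is bounded by the $p(\cdot)\,d\theta/(1-w)$-integrable constant $1$, dominated convergence gives the same conclusion. For the $\Theta_1$ term I would reason as in the illustrative example: under a true $\theta\in\Theta_1$ one has $\pi\geq Pr(T>t\,|\,\theta)$ pointwise in the data, and, $X$ (hence $T$) being continuous, the probability integral transform makes $Pr(T>t\,|\,\theta)$, read as a function of the random $t=T$, uniform on $(0,1)$ under $\theta$; therefore $Pr(\pi<\alpha_S\,|\,\theta)\leq\alpha_S$ for every $n$, with equality when $\theta$ is a least favorable point of $\Theta_1$. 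Taking $\Theta_1$ as in the example — a simple null, or, more generally, a test of exact size $\alpha_S$ whose null distribution of $\pi$ is exactly uniform — gives $Pr(\pi\in S\,|\,\Theta_1)=\alpha_S$, a value independent of $n$. Substituting the two limits into the ratio above yields
\[
\lim_{n\rightarrow\infty} Pr(H_1\,|\,\pi(\neg H_1,\cdot,X_1^n)\in S) = \frac{w\,\alpha_S}{w\,\alpha_S+(1-w)} = \frac{\alpha_S w}{1-w(1-\alpha_S)},
\]
which is \eqref{1}.

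The step I expect to be the main obstacle is the $\Theta_1$ term when $\Theta_1$ is genuinely composite: the null distribution of $\pi$ is then only stochastically larger than uniform, so without a least favorable / similarity assumption one obtains only $Pr(\pi\in S\,|\,\Theta_1)\leq\alpha_S$, hence $\limsup_{n} Pr(H_1\,|\,\cdot)\leq \alpha_S w/(1-w(1-\alpha_S))$. This one-sided bound already yields the paper's point — the limit is bounded away from $0$, so the p-value is not consistent — whereas the exact value in \eqref{1} needs the exact-size condition. The residual items, namely measurability of $\theta\mapsto Pr(\pi<\alpha_S\,|\,\theta)$ and the limit--integral interchange, are routine.
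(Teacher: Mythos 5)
Your proof is correct and follows essentially the same route as the paper's: decompose $Pr(H_1\,|\,\pi\in S)$ by Bayes' theorem, use uniformity of the p-value under $H_1$ to get the $\alpha_S w$ term, and use the power-to-one hypothesis to send the $\Theta_2$ term to $1-w$. Your additional caveat about genuinely composite $\Theta_1$ (where one only gets $Pr(\pi<\alpha_S\,|\,\theta)\leq\alpha_S$ and hence a one-sided bound, which still suffices for inconsistency) is in fact more careful than the paper's proof, which simply asserts exact uniformity for all $\theta\in\Theta_1$.
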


\begin{proof} 
Under $H_1$, the p-value is uniformly distributed, so that $Pr(\pi(\neg H_1,\cdot, X_1^n) \in S)\,|\,\theta) = \alpha_S$, for $\theta\in\Theta_1$. Thus, $\int_{\Theta_1} Pr(\pi(\neg H_1,\cdot, X_1^n) \in S \,|\,\theta)p(\theta) = \alpha_S w$.
Next, under the assumption that the power of the test $T$ goes to $1$, as $n\rightarrow\infty$, the probability $\int_{\Theta_2} Pr(\pi(\neg H_1,\cdot, X_1^n) \in S \,|\,\theta)p(\theta) \rightarrow 1 - w$.  Taken together, it proves the Proposition.
\end{proof}

Since the right-hand side expression in \eqref{1} is positive, the p-value is not a consistent measure of evidence.
The limit of the probability becomes zero only at the extreme, uninteresting case of $w = 0$, i.e., when  no $X_1^n$ comes from $H_1$.  For 
the typical value of $\alpha_S = 0.01$ and $w = 1/2$, the limit value of the probability is $\alpha_S/(1+\alpha_S) = 0.0099$. For $w = 0.9$, the probability is $0.0826$. For $w = 0.999$, the probability is $0.9090$, and it converges to $1$, as $w\rightarrow 1$. The greater the relative presence of data sets from $H_1$, the higher the asymptotic probability that the data come from $H_1$, when  the p-value strongly testifies against $H_1$.

\section{Is the ratio of likelihoods consistent?}

For point sets $\Theta_1$, $\Theta_2$, the ratio of likelihoods $r_{12}$ of $H_1$ relative to $H_2$ is $r_{12} \triangleq f_1/f_2$, where $f_j \triangleq f_{X_1^n}(x_1^n\,|\, \Theta_j)$, for $j = 1,2$. The ratio $r_{12}$ measures the evidence in favor of $H_1$ (and against $H_2$), in data $X_1^n$. The larger the $r_{12}$, the stronger the evidence in favor of $H_1$ (and against $H_2$), so that $S = [k_S, \infty)$, $k_S > 1$.

First, consider the ratio of likelihoods $r_{21}$ in the example described above. 
Clearly, $Pr(r_{21}(\neg H_1, H_2, X_1^n) \in S \,|\, \Theta_1) = 1 - \Phi(\log k_S/\delta\sqrt{n}  + \sqrt{n}\delta/2)$, which, under the assumption $\delta > 0$, converges to $0$, as $n\rightarrow\infty$. And,
$Pr(r_{21}(\neg H_1, H_2, X_1^n) \in S \,|\, \Theta_2) = 1 - \Phi(\log k_S/\delta\sqrt{n}  - \sqrt{n}\delta/2)$, which, under the assumption $\delta > 0$, converges to $1$, as $n\rightarrow\infty$.
Thus, $\lim_{n\rightarrow\infty} Pr(H_1\,|\, r_{21}(\neg H_1, H_2, X_1^n) \in S) = 0$.
Hence, the ratio of likelihoods is a consistent measure of evidence, in this example.

And the consistency is not accidental, as stated in the following Proposition.

\begin{thm}
For point sets $\Theta_1$, $\Theta_2$, and $p(\Theta_1)  \in (0, 1)$,  
the ratio of likelihoods $r_{21}(\neg H_1, H_2, X_1^n)$ is a consistent measure of evidence, i.e.,
\begin{equation*}
\lim_{n\rightarrow\infty} Pr(H_1 \,|\, r_{21}(\neg H_1, H_2, X_1^n) \in S) = 0.
\end{equation*}
\end{thm}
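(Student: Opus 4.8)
The strategy mirrors the structure of the worked example but replaces the explicit Gaussian computation with a general large-deviations / consistency argument. Write $S = [k_S, \infty)$ for the strong-evidence category of $r_{21}$, and let $\theta_1, \theta_2$ be the two points comprising $\Theta_1, \Theta_2$, with $w = p(\Theta_1) \in (0,1)$. By Bayes' rule,
\begin{equation*}
Pr(H_1 \,|\, r_{21} \in S) = \frac{w\, Pr(r_{21} \in S \,|\, \theta_1)}{w\, Pr(r_{21} \in S \,|\, \theta_1) + (1-w)\, Pr(r_{21} \in S \,|\, \theta_2)}.
\end{equation*}
So it suffices to show two things: (i) the numerator factor $Pr(r_{21}(\neg H_1, H_2, X_1^n) \in S \,|\, \theta_1) \to 0$, and (ii) the second term in the denominator, $Pr(r_{21} \in S \,|\, \theta_2)$, is bounded away from $0$ (convergence to $1$ is more than enough, but not strictly needed). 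Given (i) and (ii), the ratio is squeezed to $0$.

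First I would establish (i). Since $X_1, \dots, X_n$ is an i.i.d.\ sample, $\log r_{21} = \sum_{i=1}^n \log\frac{f_X(X_i \,|\, \theta_2)}{f_X(X_i \,|\, \theta_1)}$. Under $\theta_1$, the summands are i.i.d.\ with mean $-D(f_{\theta_1} \,\|\, f_{\theta_2})$, the Kullback–Leibler divergence, which is strictly negative because $\theta_1 \neq \theta_2$ forces $f_X(\cdot \,|\, \theta_1) \neq f_X(\cdot \,|\, \theta_2)$ (the identifiability assumption in Section 2). Hence $\frac1n \log r_{21} \to -D(f_{\theta_1} \,\|\, f_{\theta_2}) < 0$ in probability (weak law of large numbers), so $Pr(\log r_{21} \ge \log k_S \,|\, \theta_1) = Pr(r_{21} \in S \,|\, \theta_1) \to 0$. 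Symmetrically, under $\theta_2$ the summand mean is $+D(f_{\theta_2} \,\|\, f_{\theta_1}) > 0$, so $\frac1n \log r_{21} \to D(f_{\theta_2} \,\|\, f_{\theta_1}) > 0$, giving $Pr(r_{21} \in S \,|\, \theta_2) \to 1$, which settles (ii) with room to spare. Plugging the limits into the Bayes expression yields $0 / (0 + (1-w)\cdot 1) = 0$.

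The main obstacle is a definitional/integrability subtlety rather than a deep one: I must make sure $D(f_{\theta_1} \,\|\, f_{\theta_2})$ is well-defined and strictly positive, and in particular that it is not the degenerate case $D = 0$ (ruled out by identifiability) nor that the relevant log-likelihood-ratio increments fail to have a finite mean. The clean way is to invoke the weak law of large numbers in the form that only needs $E|\log(f_{\theta_2}/f_{\theta_1})| < \infty$ under $\theta_1$; if one wants to avoid even that mild moment assumption, one can instead quote the standard consistency of the likelihood ratio (e.g.\ that $r_{21} \to 0$ a.s.\ under $\theta_1$ and $r_{21} \to \infty$ a.s.\ under $\theta_2$, which holds under the model's identifiability by a Kullback–Leibler argument with truncation), and then the two probabilities in (i)–(ii) converge to $0$ and $1$ respectively by bounded convergence. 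Either route closes the proof; I would present the KL/LLN version as the main line and remark that it matches the explicit Gaussian computation of the preceding example, where $D(f_{\theta_1}\,\|\,f_{\theta_2}) = \delta^2/2$ per observation.
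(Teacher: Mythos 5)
Your proposal is correct and follows essentially the same route as the paper's own (very terse) proof: apply the law of large numbers to $\tfrac{1}{n}\log (f_2/f_1)$ under each $\theta_j$ and use strict positivity of the Kullback--Leibler divergence between distinct distributions, then conclude via the Bayes decomposition of $Pr(H_1\,|\,r_{21}\in S)$. Your additional remarks on integrability and the degenerate case $D=0$ are sensible elaborations of details the paper leaves implicit.
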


\begin{proof} The claim follows from the Law of Large Numbers (LLN), applied to $1/n\log f_2/f_1\,|\,\Theta_j$, and the fact that the Kullback Leibler
divergence is positive for distinct distributions.
\end{proof}

Recently, Bickel \cite{Bickel} proposed an extension of the ratio of likelihoods (see also \cite{LS}, \cite{Zhang})
to the case of general $\Theta_1$, $\Theta_2$: $r_{12}^e \triangleq \sup_{\Theta_1} f(X_1^n\,|\, \theta)/{\sup_{\Theta_2} f(X_1^n\,|\, \theta)}$, and suggested its use as a measure of evidence.
The extended ratio of likelihoods reduces to the ratio of likelihoods, when $\Theta_1$, $\Theta_2$ are point sets.
Under additional assumptions, $r_{21}^e$ is a consistent measure of evidence. Before stating the result, recall that the maximum likelihood (ML) estimator $\hat\theta(\tilde\Theta)$ of $\theta$, restricted to $\tilde\Theta\subset\Theta$, is $\hat\theta(\tilde\Theta) \triangleq \arg\sup_{\theta\in\tilde\Theta} f_{X_1^n}(x_1^n\,|\,\theta)$.

\begin{thm}
Let $f_X(x\,|\, \theta)$ and $\Theta_1$, $\Theta_2$ be such that the maximum likelihood estimators $\hat\theta_j(\Theta_j)$, restricted to $\Theta_j$, are consistent estimators of $\theta$, $j = 1, 2$.
And let the maximum likelihood estimators
$\hat\theta_j(\Theta_i)$,  restricted to $\Theta_i$,
converge in probability to some finite $\bar\theta_j$, $i,j\in\{1,2\}, i\neq j$. Let $p(\theta)$ be such that $\int_{\Theta_1}
p(\theta) \in (0, 1)$.
Then the extended ratio of likelihoods $r_{21}^e(\neg H_1, H_2, X_1^n)$ is a consistent measure of evidence against $H_1$, relative to $H_2$.
\end{thm}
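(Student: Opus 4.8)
The plan is to run the argument behind Proposition~2 one level up, with the likelihoods $f_j$ replaced by the profile likelihoods $\sup_{\Theta_j} f(X_1^n\mid\theta)$ and the two restricted maximum likelihood estimators playing the role of the individual parameter values. Writing $\hat\theta(\Theta_j)$ for the estimator restricted to $\Theta_j$, one has
\[
\tfrac1n\log r_{21}^e(\neg H_1, H_2, X_1^n) \;=\; \tfrac1n\log f\bigl(X_1^n\mid\hat\theta(\Theta_2)\bigr) - \tfrac1n\log f\bigl(X_1^n\mid\hat\theta(\Theta_1)\bigr),
\]
and I would analyse this difference separately in the two regimes $\theta\in\Theta_1$ and $\theta\in\Theta_2$ determined by the true parameter drawn from $p$.

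Take first a true $\theta\in\Theta_1$. Then $\hat\theta(\Theta_1)\to\theta$ by the consistency assumption, while $\hat\theta(\Theta_2)\to\bar\theta_1$ for some finite $\bar\theta_1$ (possibly depending on the true $\theta$) by hypothesis. Combining the law of large numbers for $\tfrac1n\log f(X_1^n\mid\theta')$ at a fixed $\theta'$ with a uniform law of large numbers over $\Theta_1$ and over $\Theta_2$ and with the continuity of $\theta'\mapsto E_\theta[\log f(X\mid\theta')]$, one obtains
\[
\tfrac1n\log r_{21}^e \;\xrightarrow{\,P\,}\; E_\theta[\log f(X\mid\bar\theta_1)] - E_\theta[\log f(X\mid\theta)] \;=\; -\,D\bigl(f_X(\cdot\mid\theta)\,\|\,f_X(\cdot\mid\bar\theta_1)\bigr),
\]
where $D(\cdot\,\|\,\cdot)$ is the Kullback--Leibler divergence. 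As $\Theta_1,\Theta_2$ are disjoint, $\bar\theta_1\neq\theta$, so by the identifiability assumption the two densities differ and the divergence is strictly positive; hence $\tfrac1n\log r_{21}^e$ tends to a strictly negative limit, $r_{21}^e\to 0$ in probability, and $Pr(r_{21}^e(\neg H_1, H_2, X_1^n)\in S\mid\theta)\to 0$. The regime $\theta\in\Theta_2$ is symmetric: now $\hat\theta(\Theta_2)\to\theta$ and $\hat\theta(\Theta_1)\to\bar\theta_2$, so $\tfrac1n\log r_{21}^e\to D\bigl(f_X(\cdot\mid\theta)\,\|\,f_X(\cdot\mid\bar\theta_2)\bigr) > 0$, $r_{21}^e\to\infty$ in probability, and $Pr(r_{21}^e\in S\mid\theta)\to 1$.

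To finish, I would integrate against $p(\theta)$ and invoke dominated convergence, the conditional probabilities being bounded by $1$: the numerator $\int_{\Theta_1} Pr(r_{21}^e\in S\mid\theta)\,p(\theta)\to 0$, while the denominator $\int_{\Theta_1} Pr(r_{21}^e\in S\mid\theta)\,p(\theta) + \int_{\Theta_2} Pr(r_{21}^e\in S\mid\theta)\,p(\theta)\to 0 + p(\Theta_2)$, and $p(\Theta_2) = 1 - \int_{\Theta_1} p(\theta) > 0$ by the assumption $\int_{\Theta_1}p(\theta)\in(0,1)$. Therefore $Pr(H_1\mid r_{21}^e(\neg H_1, H_2, X_1^n)\in S)\to 0$, which is the assertion.

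I expect the genuine work to be concentrated in the middle step, namely passing from the pointwise law of large numbers for $\tfrac1n\log f(X_1^n\mid\theta')$ to the convergence of the \emph{profile} log-likelihood $\tfrac1n\log f(X_1^n\mid\hat\theta(\Theta_j))$ evaluated at a random, converging argument; this requires stochastic equicontinuity or a uniform law of large numbers over each $\Theta_j$, which is essentially the regularity already encoded in the hypothesis that the restricted maximum likelihood estimators are consistent. A secondary point to keep in mind is that the conclusion uses $f_X(\cdot\mid\bar\theta_j)\neq f_X(\cdot\mid\theta)$: if the pseudo-true value $\bar\theta_j$ could coincide in distribution with the truth — which can happen only when the truth sits on the common boundary $\overline{\Theta_1}\cap\overline{\Theta_2}$, a $p$-null event for a genuine density $p$ — the divergence would vanish, so it is enough (and implicitly assumed) that the two hypotheses are separated off such a null set.
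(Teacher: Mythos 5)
Your argument is correct and rests on the same two ingredients as the paper's proof: the law of large numbers applied to the normalized log of the profile likelihood ratio conditionally on the true $\theta$, and the strict positivity of the Kullback--Leibler divergence between distinct distributions, with the assumed consistency of $\hat\theta_j(\Theta_j)$ and the convergence of $\hat\theta_j(\Theta_i)$ to pseudo-true values supplying the relevant limits. Where you differ is in how the conditional limits are aggregated into $Pr(H_1\mid r_{21}^e\in S)$: the paper sandwiches this quantity between $\bigl[\inf_{\Theta_1}Pr(r_{21}^e>k_S\mid\theta)\bigr]\int_{\Theta_1}p(\theta)$ and $\bigl[\sup_{\Theta_1}Pr(r_{21}^e>k_S\mid\theta)\bigr]\int_{\Theta_1}p(\theta)\big/\int_{\Theta_2}Pr(r_{21}^e>k_S\mid\theta)p(\theta)$, so its route implicitly needs the conditional probability to vanish \emph{uniformly} over $\Theta_1$, whereas you integrate the pointwise limits and invoke dominated convergence with the trivial bound $1$, which requires only pointwise convergence $p$-almost everywhere; your version is, if anything, the more economical at that step. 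You also make explicit two points the paper passes over in silence: the need for a uniform law of large numbers (or stochastic equicontinuity) to justify evaluating the empirical mean of the log-likelihood at the random, converging arguments $\hat\theta(\Theta_j)$, and the possibility that the pseudo-true value $\bar\theta_j$ reproduces the true density when the truth lies on the common boundary of $\overline{\Theta_1}$ and $\overline{\Theta_2}$ --- your observation that this is a $p$-null set (and hence harmless under your dominated-convergence aggregation, though not automatically under the paper's supremum bound) is a genuine gain in precision over the published argument.
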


\begin{proof} 
Under the assumed consistency and convergence of the constrained MLs, the claim follows from the LLN and the positivity of the Kullback Leibler divergence between two different distributions, applied to the probability $Pr(r_{21}^e > k_S\,|\,\theta)$ in the 
upper bound $\frac{\left[\sup_{\Theta_1} Pr(r_{21}^e > k_S\,|\,\theta)\right]\int_{\Theta_1}p(\theta)}{\int_{\Theta_2} Pr(r_{21}^e > k_S\,|\,\theta)p(\theta)}$ and the lower bound $\left[\inf_{\Theta_1} Pr(r_{21}^e > k_S\,|\,\theta)\right]\int_{\Theta_1}p(\theta)$ of 
$Pr(\theta\in\Theta_1\,|\, r_{21}^e(\neg H_1, H_2, X_1^n)\in S)$.
\end{proof}

\section{Is the Bayes factor consistent?}

It is open to debate whether a measure of evidence can depend on a prior information. Bayesians usually measure evidence for $H_1$ relative to $H_2$ by the Bayes Factor $b_{12} \triangleq \int_{H_1} f(X_1^n\,|\,\theta)q(\theta)\,d\theta/\int_{H_2} f(X_1^n\,|\,\theta)q(\theta)\,d\theta$, where $q(\cdot)$ is the prior distribution. The Bayes Factor above $150$ is usually considered \cite{KR} as the very strong evidence for $H_1$. However, Lavine and Schervish \cite{LS} note that the Bayes factor does not satisfy the coherence requirement, while the posterior odds is coherent. Both the Bayes factor $b_{21}$ and the posterior odds $p_{21}(H_2, H_1, X_1^n) \triangleq b_{21}\,q(\Theta_2)/q(\Theta_1)$  are consistent measures of evidence against $H_1$, relative to $H_2$. Also, in analogy with the Proposition 3, consistency of the ratio of posterior modes can be established.

\section{Conclusions}

There are several measures of statistical evidence in use. Among them is the Fisherian p-value and its extensions, likelihood-based measures, such as the ratio of likelihoods and the extended ratio of likelihoods, as well as the Bayes factor and the posterior odds.
What are the criteria that a measure of evidence should satisfy? Coherence (cf. Sect. 1) is one such a criterion. It is a logical criterion. In this note, the asymptotic criterion of consistency was introduced. Besides being incoherent, the p-value is also inconsistent. The ratio of likelihoods and its extension are consistent and coherent measures. Among the Bayesian measures of evidence, for instance the posterior odds ratio is both coherent and consistent.

\theendnotes

\endnotetext[1]{Likelihood ratio is used in the Neyman Pearson hypothesis testing. To distinguish the evidential use of the likelihood ratio from its use in decision making, the former is referred to as the Ratio of Likelihoods (RL). RL has a rich history, cf. \cite{Fisher}, \cite{Barnard}, \cite{Hacking}, \cite{E}, \cite{Lindsey}, \cite{Lindsey1}, \cite{Royall_book}, \cite{Royall_1}, among others.}

\endnotetext[2]{See also Sect. 3.2 in Edward's monograph \cite{E}, and a recent work \cite{Bickel} of Bickel.}

\endnotetext[3]{In \cite{SBB}, Sellke, Bayarri, and Berger use a Monte Carlo simulation to estimate the probability $Pr(\Theta_1\,|\, \pi(\neg H_1; \cdot, X_1^n) \approx 0.05)$, for a point set $\Theta_1$, in small samples, for the p-value, and relate it to the analogous probability for the Bayes Factor, which is in the studied setting the same as the ratio of likelihoods. The authors do not propose an asymptotic  criterion for a measure of evidence.}

\endnotetext[4]{The Proposition 1 holds also for the p-value that is valid in the sense of Mudholkar and Chaubey \cite{MCh}.}

\bigskip
\rightline{to mar, in memoriam}

\section{References}


\begin{thebibliography}{99}

\bibitem{Barnard}
Barnard, G. A., Jenkins, G. M., and Winsten, C. B. (1962). Likelihood inference and time series (with discussion), \emph{J. Roy. Statist. Soc. Ser. A}, 125:321-372.


\bibitem{BW}
Berger, J. O., and Wolpert, R. L. (1988). \emph{The likelihood principle: a review, generalizations and statistical implications}, Hayward: Institute of Mathematical Statistics.

\bibitem{Bickel}
Bickel, D. R. (2008). The strength of statistical evidence for composite hypotheses with an application to multiple comparisons, COBRA preprint series,     paper {49}. To appear in \emph{Statist. Sinica}.


\bibitem{CH}
Cox, D. R. and Hinkley, D. V. (1974). \emph{Theoretical statistics}, London: Chapman \& Hall.

\bibitem{E}
Edwards, A. W. F. (1992). \emph{Likelihood (Expanded edition)}. Baltimore: Johns Hopkins University Press. 

\bibitem{Fisher}
Fisher, R. A. (1956). \emph{Statistical methods and scientific inference}, Edinburgh: Oliver and Boyd.

\bibitem{G}
Gabriel, K. R. (1969). Simultaneous test procedures - some theory of multiple comparisons, \emph{Ann. Math. Statist.}, 40:224-250.

\bibitem{Hacking}
Hacking, I. (1965). \emph{Logic of statistical inference}, New York: Cambridge University Press.

\bibitem{KR}
Kass, R., and Reftery, A. (1995). Bayes factors, \emph{J. Amer. Statist. Assoc.},
90:773-795.

\bibitem{LS}
Lavine, M., and Schervish, M. J. (1999). Bayes factors: what they are and what they are not, \emph{Amer. Statist.}, 53(2):119-122.


\bibitem{Lindsey}
Lindsey, J. K. (1999). Some statistical heresies (with discussion), \emph{Statistician}, 48(1):1-40.

\bibitem{Lindsey1}
Lindsey, J. K. (1999). Relationships among sample size, model selection and likelihood regions, and scientifically important differences, \emph{Statistician}, 48(3):401-411.

\bibitem{MCh}
Mudholkar, G. S., and Chaubey, Y. P. (2009). On defining p-values, \emph{Statist. Probab. Lett.}, 79:1963-1971.


\bibitem{Royall_book}
Royall, R. (1997). \emph{Statistical evidence (A likelihood paradigm)}, London: Chapmann \& Hall.

\bibitem{Royall_1}
Royall, R. (2000). On the probability of observing misleading statistical evidence (with discussion), \emph{J. Amer. Statist. Assoc.}, 95(451):760-768.

\bibitem{Schervish}
Schervish, M. (1996). P values: what they are and what they are not, \emph{Amer. Statist.}, 50:203-206.

\bibitem{SBB}
Sellke, T., Bayarri, M. J., and  Berger, J. O. (2001). Calibration of p values for testing precise null hypotheses. \emph{Amer. Statist.}, 55(1):62-71.


\bibitem{Wasserman}
Wasserman, L. (2004). \emph{All of statistics (A concise course in statistical inference)}, New York: Springer-Verlag.

\bibitem{Zhang}
Zhang, Z. (2009). A law of likelihood for composite hypotheses. 
Available online at http://arxiv.org/abs/0901.0463.

\end{thebibliography}
\end{document}